\newcommand{\mS}{\mathcal{S}}
\newcommand{\mP}{\mathcal{P}}
\newcommand{\mE}{\mathcal{E}}
\newcommand{\mF}{\mathcal{F}}
\newcommand{\mD}{\mathcal{D}}
\newcommand{\mG}{\mathcal{G}}
\newcommand{\mL}{\mathcal{L}}
\newcommand{\mV}{\mathcal{V}}
\newtheorem{theorem}{\bf Theorem}
\newtheorem{lemma}{\bf Lemma}
\newtheorem{assumption}{\bf Assumption}
\newtheorem{remark}{\bf Remark}
\newcommand{\vnorm}[1]{\left|\left|#1\right|\right|}
\definecolor{Red}{rgb}{1,0,0}
\definecolor{Blue}{rgb}{0,0,1}
\definecolor{Green}{rgb}{0,1,0}
\definecolor{magenta}{rgb}{1,0,.6}
\definecolor{lightblue}{rgb}{0,.5,1}
\definecolor{lightpurple}{rgb}{.6,.4,1}
\definecolor{gold}{rgb}{.6,.5,0}
\definecolor{orange}{rgb}{1,0.4,0}
\definecolor{hotpink}{rgb}{1,0,0.5}
\definecolor{newcolor2}{rgb}{.5,.3,.5}
\definecolor{newcolor}{rgb}{0,.3,1}
\definecolor{newcolor3}{rgb}{1,0,.35}
\definecolor{darkgreen1}{rgb}{0, .35, 0}
\definecolor{darkgreen}{rgb}{0, .6, 0}
\definecolor{darkred}{rgb}{.75,0,0}
\newcommand{\ts}{t^\star}
\begin{document}

\title{Robust Distributed Averaging on Networks with Adversarial Intervention}

\author{Ali Khanafer, Behrouz Touri, and Tamer Ba\c{s}ar
\thanks{This work was supported in part by an AFOSR MURI Grant FA9550-10-1-0573.}
\thanks{Ali Khanafer and Tamer Ba\c{s}ar are with the Coordinated Science Laboratory, ECE Department, University of Illinois at Urbana-Champaign, USA {\tt\small {khanafe2,basar1}@illinois.edu}}
\thanks{Behrouz Touri is with the ECE Department, Georgia Institute of Technology, USA {\tt\small touri@gatech.edu}}
}

\maketitle

\begin{abstract}
We study the interaction between a network designer and an adversary over a dynamical network. The network consists of nodes performing continuous-time distributed averaging. The goal of the network designer is to assist the nodes reach consensus by changing the weights of a limited number of links in the network. Meanwhile, an adversary strategically disconnects a set of links to prevent the nodes from converging. We formulate two problems to describe this competition where the order in which the players act is reversed in the two problems. We utilize Pontryagin's Maximum Principle (MP) to tackle both problems and derive the optimal strategies. Although the canonical equations provided by the MP are intractable, we provide an alternative characterization for the optimal strategies that highlights a connection with potential theory. Finally, we provide a sufficient condition for the existence of a saddle-point equilibrium (SPE) for this zero-sum game.
\end{abstract}


\section{Introduction}
Various physical and biological phenomena where global patterns of behaviour stem from local interactions have been modelled using linear averaging dynamics. In such dynamics an agent updates its value as a linear combination of the values of its neighbors. Averaging dynamics is the basic building block in many multi-agent systems, and it is widely used whenever an application requires multiple agents, who are graphically constrained, to synchronize their measurements. Examples include formation control, coverage, distributed estimation and optimization, and flocking \cite{SaberMurray,SaberFlocking,NedicOzdaglarParrilo}. Besides engineering, linear averaging finds applications in other fields. For instance, social scientists use averaging to describe the evolution of opinions in networks \cite{JacksonGolub}.

In practice, communication among agents is prone to different non-idealities which can affect the convergence properties of distributed algorithms. Transmission delays \cite{NedicOzdaglar}, noisy links \cite{XiaoBoyd,TouriNedic}, and quantization \cite{KashyapBasarSrikant} are some examples of non-idealities that are due to the physical nature of the application. In addition to physical restrictions, researchers have also studied averaging dynamics in the presence of malicious nodes in the network \cite{Sundaram,SandbergJohansson}. Various algorithms that guarantee resilience against node failures have been proposed in the literature \cite{pease1980reaching}. 

In \cite{KhanaferTouriBasarNecSys12}, we explored the effect of an external adversary who attempts to prevent the nodes from reaching consensus by disconnecting certain links in the network. We derived his optimal strategy and demonstrated that it admits a potential-theoretic analogy. Here, we introduce a network designer that attempts to counter the effect of the adversary and help the nodes reach consensus by changing the weights of certain links. Both the adversary and the designer are constrained by their physical properties, e.g., battery life and communication range. To capture such constraints, we allow the adversary and the designer to affect  only a fixed number of links. 

Such an interaction between a network designer and an adversary can occur in various practical applications. For example, in a wireless network, the adversary can be a jammer who is capable of breaking links by injecting high noise signals that disrupt the communication among nodes. The link weights in such a network represent the capacities of the corresponding links. The designer can modify the capacity of a certain link using various communication techniques such as introducing parallel channels between two nodes as in orthogonal frequency division multiple access (OFDMA) networks \cite{tse2005fundamentals}.

Our model is different from the current literature in two ways: (i) the adversary and the designer compete over a dynamical network. This is different from the problems studied in the computer science and economics communities where the network is usually static \cite{Goyal2010Robust}; (ii) the players in our model are constrained and cannot operate with an infinite budget. This enables us to model practical scenarios more closely rather than allowing the malicious behaviour to be unrestricted as in \cite{LeBlanc,BicchiBullo,Sundaram}.

The main goal of this work is to derive optimal strategies for the designer and the adversary who have conflicting objectives. Because the order in which the players act affects the resulting utilities, we formulate two problems based on the order of play, allowing each player to have the \emph{first-move-advantage} in one of the problems. When the adversary is allowed to play first, he is capable of restricting the available actions of the designer since some links will disappear from the network. Hence, if we were to cast a zero-sum game between the players, one should not expect the existence of a saddle-point equilibrium (SPE) in pure strategies. The question we would like to answer is then: \emph{are there scenarios where the order of play does not affect the utilities of the players, which leads to the existence of an SPE?} 

Accordingly, the contributions of this paper are as follows.
\begin{itemize}
\item We capture the interaction between the designer and the adversary by formulating two problems. In the min--max problem, the designer declares a strategy first to which the adversary reacts by its optimal response. The second problem is a max--min one where the order of play is reversed. We derive the optimal strategies in both problems by employing the Maximum Principle (MP).
\item We provide a method to compute the optimal strategies without requiring the adjoint equations to be solved. This method provides a new characterization for the optimal strategies in terms of potential-theoretic quantities.
\item We derive a sufficient condition guaranteeing the existence of an SPE.
\end{itemize}

The rest of the paper is organized as follows. In Section \ref{ProbDesc}, we formulate and provide the preliminaries of the min--max and max--min problems. We analyze both problems in Section \ref{Problems} and derive their solutions. An implementation method is provided in Section \ref{implement}. We provide a sufficient condition for the existence of an SPE in Section \ref{suffCond}, and we conclude the paper in Section \ref{Conclusion}. An Appendix contains two lemmas used in the derivation of the sufficient condition.

\emph{Notation}: We denote the set of edges in a graph $\mG$ by $\mE(\mG)$. When clear from the context, we will drop the argument of any set defined on a graph. To emphasize the effect of link removal by the adversary, we will sometimes write $\mG(u(t))$ to denote the graph resulting after the adversary acts at time $t$. We will use $\sum_{j > i} (.)$ to mean $\sum_{j=2}^{n}\sum_{i=1}^{j-1}(.) $. 

\section{Problem Description} \label{ProbDesc}
Consider a connected network of $n$ nodes described by an undirected graph  $\mathcal{G}$ with $n$ vertices and $m$ edges. We will denote an edge in $\mathcal{E}(\mG)$ between nodes $i$ and $j$ by $\{i,j\}$. The value, or state, of the nodes at time instant $t$ is given by $x(t) = [x_1(t),...,x_n(t)]^T$. The nodes start with an initial value $x(0)=x_0$, and they are interested in computing the average of their initial measurements $x_{avg} = \frac{1}{n}\sum_{i=1}^n x_i(0)$ via local averaging. We consider the continuous-time averaging dynamics given by
\begin{equation} \label{systemEqn}
\dot{x}(t) = A(t)x(t), \quad x(0) = x_0,
\end{equation}
where the matrix $A(t)$ has the following properties
\[
A(t) = A(t)^T, \quad A_{ij}(t) \geq 0\quad  i \neq j, \quad A(t)\mathbf{1} = 0.
\]
Define $\bar{x} = \mathbf{1}x_{avg}$ and let $M = \frac{\mathbf{1}\mathbf{1}^T}{n}$. A well-known result states that, given the above assumptions, the nodes will reach consensus as $t\to \infty$, i.e.,  $\lim_{t\to \infty} x(t) =\bar{x}$ \cite{SaberMurray}. In the absence of the adversary and the network designer, $A(t)$ is time-invariant with $A_{ij}(t)=a_{ij}$. To achieve their respective objectives, the designer and the adversary control the elements of $A(t)$ as we explain next. 

The adversary attempts to slow down convergence by breaking at most $\ell \leq m$ links at each time instant. Let $u_{ij}(t) \in \{0,1\}$ be the weight the adversary assigns to link $\{i,j\}$. He breaks link $\{i,j\}$ at time $t$ when $u_{ij}(t) = 1$. His control is given by $u(t)=[u_{12}(t),u_{13}(t),...,u_{1n}(t),u_{23}(t),...,u_{(n-1)n}(t)]^T$. We will denote the number of links the adversary breaks at time $t$ by $N_u(t)$.  In accordance with the above, the strategy space of the adversary is
\[
\mathcal{U} = \left\{u \in \{0,1\}^{n\choose2}: N_u \leq \ell \right\}.
\]
Meanwhile, the network designer attempts to accelerate convergence by controlling the weights of the edges. Let $v(t)=[v_{12}(t),v_{13}(t),...,v_{1n}(t),v_{23}(t),...,v_{(n-1)n}(t)]^T$ be the control of the designer. The designer can change the weight of a given link by adding $v_{ij}(t)$ to its weight $a_{ij}$. We assume that $v_{ij}(t) \in [0,b]$ and that the number of links the designer modifies $N_v(t)$ is at most $\ell$. The strategy set of the designer is therefore
\[
\mathcal{V} = \left\{v\in [0,b]^{n\choose2}: N_v \leq \ell \right\}.
\] 
If $\{i,j\} \notin \mathcal{E}$, then $u_{ij}(t) = v_{ij}(t) = 0$ for all $t$. Given the above definitions, we can write the $\{i,j\}$-th element of the matrix $A(u(t),v(t))$ as 
\[
A_{ij}(u(t),v(t)) = (a_{ij} + v_{ij}(t)) \cdot (1-u_{ij}(t)).
\]
Implicit in this definition is the assumption that the effect of the adversary and the designer on the links lasts only instantly, and the weights of the links are reset to $a_{ij}$ at the next time instant. This is motivated by applications where the state of a channel deteriorates only when the adversary is acting, e.g., wireless communications where the effect of the jammer is not permanent.

Define the following functional:
\begin{equation*}
J(u,v) = \int_0^T k(t)\left| x(t)-\bar{x}\right|^2 dt,
\end{equation*}
where the kernel $k(t)$ is positive and integrable over $[0,T]$. The utility of the adversary is then given by $J(u,v)$, and that of the designer is $-J(u,v)$. We will study the following two problems. In the first problem, the adversary acts first by selecting the links he is interested in breaking. Then, the network designer optimizes his choices over the resulting graph $\mathcal{G}(u(t))$. More formally, we have
\begin{eqnarray*}
&\max\limits_{u(t) \in \mathcal{U}} \min\limits_{v(t) \in \mathcal{V}} & J(u,v) \\
& \text{s.t.} & \dot{x}(t)=A(u(t),v(t))x(t), \quad x(0) = x_0.
\end{eqnarray*}

In the second problem, the order is reversed, and the designer acts first:
\begin{eqnarray*}
&\min\limits_{v(t) \in \mathcal{V}} \max\limits_{u(t) \in \mathcal{U}}  & J(u,v) \\
& \text{s.t.} & \dot{x}(t)=A(u(t),v(t))x(t), \quad x(0) = x_0.
\end{eqnarray*}

In a computer network, where consensus could model the spread of a virus, the max--min problem allows the network architect (who is the maximizer here) to implement networks that are robust against a strategic virus diffusion. The min--max problem finds applications in army combat situations where the designer (the minimizer) attempts to counter the attacks of the enemy to disrupt the communications among agents. In both problems, we make the following assumption:
\begin{assumption}
The initial matrix $A(0,0)$, the state $x(t)$, and the value $b$ are common information to both players. 
\end{assumption}
The following two remarks are in order.
\begin{remark} \label{rem::complex}
\emph{(Problem Complexity)} Let us consider the problem of the adversary for a given strategy of the designer. Assume that the adversary can act in $K$ time instances over the interval $[0,T]$. Then, for small $\ell$, the total number of links that need to be tested in a brute-force approach is
\begin{equation}\label{bruteForce}
{m\choose{\ell}}^K \approx m^{\ell K}.
\end{equation}
Clearly, the brute-force approach incurs exponential-time complexity, especially if any of $m,\ell,k$ becomes large. The same argument applies to the network designer.
\end{remark}
\begin{remark}
\emph{(Non-Rectangular Strategies)} When the strategy sets are rectangular, i.e., the strategy of one player does not restrict the strategy space of the other, the following holds
\begin{equation} \label{eqn::upperlowerV}
\underline{V} = \max\limits_{u(t) \in \mathcal{U}} \min\limits_{v(t) \in \mathcal{V}} J(u,v) \leq \min\limits_{v(t) \in \mathcal{V}}  \max\limits_{u(t) \in \mathcal{U}} J(u,v) = \overline{V},
\end{equation}
where $\underline{V}, \overline{V}$ are called the lower and upper values of the game, respectively. An SPE in pure strategies exists when $\underline{V} = \overline{V}$. When the strategy spaces are non-rectangular, one should \emph{not} expect the existence of an SPE in pure-strategies nor the order in (\ref{eqn::upperlowerV}). In the max--min problem, the adversary restricts the strategy space of the designer since $u(t)$ removes links from $\mG$. Hence, the max--min and min--max problems should be studied separately.
\end{remark}
\section{Adversary vs. Network Designer: Two Problems} \label{Problems}
We will now present the solutions for the two problems presented above. To arrive at the optimal strategies of the players, we employ the MP. In Section \ref{implement}, we prove that $N_u^\star = N_v^\star = \ell$; we will use this fact in the proofs of this section. In what follows, we will often drop the time index and other arguments for notational simplicity. 
\subsection{The Min--Max Problem}
The Hamiltonian associated with the max--min problem is:
\[
H(x,p,u,v) = k(t)\left|x(t)-\bar{x}\right|^2 + p^T(t)A(t)x(t).
\]
The first-order necessary conditions for optimality are:
\begin{eqnarray}
\dot{p} & = & -\frac{\partial }{\partial x}H \nonumber \\
& = & -2k(x-\bar{x}) - Ap, \quad p(T) = 0 \label{eqn::ODEp1} \\
\dot{x} & = & Ax, \quad x(0) = x_0 \label{eqn::ODEx1} \\
(u^\star,v^\star) & = & \arg \max \arg \min \left\{H : u \in \mathcal{U}, v \in \mathcal{V}\right\}, \nonumber
\end{eqnarray}
where $x,p \in C^1[0,T]$, the space of continuously differentiable functions over $[0,T]$.

To find the optimal strategies, let us first write
\begin{eqnarray*}
p^TAx & = & \sum_{i=1}^n p_i \left(\sum_{j=1}^n A_{ij}x_j \right) \\
& = &  \sum_{i=1}^n p_i \left(- \sum_{j=1,j\neq i}^n A_{ij}x_i + \sum_{j=1, j\neq i}^n A_{ij}x_j \right) \\
& = & \sum_{j>i} (a_{ij}+v_{ij})(1-u_{ij})(p_j-p_i)(x_i - x_j).
\end{eqnarray*}
Define the function
\[
f_{ij} = (p_j-p_i)(x_i - x_j),
\]
and let us write
\begin{eqnarray}
 \max_{u \in \mathcal{U}} \min_{v \in \mathcal{V}} H & = & \max_{u \in \mathcal{U}} \min_{v \in \mathcal{V}} k\left|x-\bar{x}\right|^2 + p^TAx \nonumber\\
& = & k\left|x-\bar{x}\right|^2 + \max_{u \in \mathcal{U}} \min_{v \in \mathcal{V}} \sum_{j>i}  A_{ij}f_{ij}. \label{eqn::maxminH}
\end{eqnarray}
Note that we cannot decouple the maximization (or minimization) into $n\choose{2}$ maximization problems, each corresponding to a link or a single term inside the double summation. This is due to the constraint on the number of links that can be targeted by the players. Before finding the optimal strategies, let us first define the following sets. Let $\mathcal{F}(\mG') = \{f_{ij}<0: \{i,j\} \in \mathcal{E}(\mG')\}$ for some graph $\mG'$. It should be noted that although $\{i,j\}$ and $\{j,i\}$ belong to $\mathcal{E}(\mG')$, we only include $f_{ij}$ once in $\mathcal{F}(\mG')$, i.e., we treat $\{i,j\}$ and $\{j,i\}$ as one link. This applies to all the definitions to follow. Let $\mathcal{F}_{\pi}(\mG')$ be a ranking of the set $\mathcal{F}(\mG')$ such that $\mathcal{F}_{\pi,1}(\mG') \leq \hdots \leq \mathcal{F}_{\pi,|\mF(\mG')|}(\mG')$. Further, let $\mathcal{F}_{\pi}^\ell(\mG')$ be the set containing the smallest $\ell$ values in $\mathcal{F}_{\pi}(\mG')$. It is understood that if $\mF_\pi(\mG')$ contains fewer than $\ell$ negative elements, $\mF_\pi^\ell$ will contain all those elements. Define the set operator $\Phi_i:S(\mG') \to \mE(\mG')$ that returns the links in $\mE(\mG')$ that correspond to the elements of $\mS(\mG')$. Also, define $\Phi_i: S(\mG') \to \mE(\mG')$ that returns the links in $\mE(\mG')$ corresponding to the \emph{smallest} $i$ elements of the set $S(\mG')$. When $|S(\mG')| < i$, $\Phi_i(S(\mG')) = \Phi(S(\mG'))$. We also adopt the convention $\Phi_0(.) = \{ \emptyset \}$. We can then write 
$$
\mathcal{F}_{\pi}^\ell(\mG') = \Phi_{\ell}\left(\left\{f_{ij}\leq \mathcal{F}_{\pi,\ell+1}(\mG') : \{i,j\}\in \mE(\mG')  \right\}\right).
$$
Define the sets $\mathcal{D}_1(\mG') = \{a_{ij}f_{ij} < 0: \{i,j\}\in \mathcal{E}(\mG') \}$, $\mathcal{D}_2(\mG') = \{(a_{ij}+b)f_{ij} < 0: \{i,j\}\in \mathcal{E}(\mG') \}$, and $\mathcal{D}(\mG') = \mathcal{D}_1(\mG') \cup \mathcal{D}_2(\mG')$. Also, let $\mathcal{D}_{\pi}(\mG')$ be a ranking of $\mathcal{D}(\mG')$ such that $\mathcal{D}_{\pi,1}(\mG') \leq \hdots \leq \mD_{\pi,|\mD(\mG')|}(\mG')$. Finally, define 
\begin{eqnarray*}
\mathcal{D}_{\pi}^\ell(\mG') & = & \Phi_\ell(\left \{a_{ij}f_{ij}\leq \mathcal{D}_{\pi,\ell+1}(\mG') \text{ or } \right. \\ && \left. (a_{ij}+b)f_{ij}\leq \mathcal{D}_{\pi,\ell+1}(\mG') : \{i,j\}\in \mE(\mG') \right\}).
\end{eqnarray*}

The following theorem specifies the optimal strategies of the adversary and the designer.

\begin{theorem} \label{thm::maxmin}
The optimal strategies for the network designer and the adversary in the max--min problem are\begin{eqnarray*}
v^\star_{ij}(t)  & = &   \left\{
  \begin{array}{l l}
    b, &  \text{$\{i,j\} \in \mathcal{F}^{\ell}_\pi(\mG^\star)$}  \\
    0, &  \text{$\{i,j\} \notin \mathcal{F}^{\ell}_\pi(\mG^\star)$ or $f_{ij} > 0$}  \\
    \text{$[0,b]$}, &  \text{otherwise}
  \end{array} \right. \\
u^\star_{ij}(t) & = & \left\{
  \begin{array}{l l}
    1, &  \text{$\{i,j\} \in \mD_{\pi}^{\ell}(\mG)$}  \\
    0, &  \text{$\{i,j\} \notin  \mD_{\pi}^{\ell}(\mG)$ or $f_{ij} > 0$} \\
    \{0,1\}, & \text{otherwise}
  \end{array} \right.  
\end{eqnarray*}
where $\mG^\star = \mG(u^\star)$ is the graph resulting after the adversary applies $u^\star$.
\end{theorem}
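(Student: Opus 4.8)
The plan is to apply Pontryagin's Maximum Principle to the max--min problem and then solve, pointwise in $t$, the static optimization over the controls that the MP leaves behind. The MP furnishes the canonical system (\ref{eqn::ODEp1})--(\ref{eqn::ODEx1}) together with the requirement that, for almost every $t$, the pair $(u^\star(t),v^\star(t))$ attains the value $\max_{u\in\mathcal{U}}\min_{v\in\mathcal{V}} H(x^\star(t),p^\star(t),u,v)$. Since $k(t)|x(t)-\bar{x}|^2$ is independent of the controls and $p^TAx=\sum_{j>i}(a_{ij}+v_{ij})(1-u_{ij})f_{ij}$ by the computation preceding the theorem, it suffices to characterize the maximizer--minimizer pair of $\sum_{j>i} A_{ij}(u,v)f_{ij}$ at each $t$, subject to $N_u\le\ell$ and $N_v\le\ell$; throughout I would use the fact, established in Section \ref{implement}, that $N_u^\star=N_v^\star=\ell$. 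I would handle the inner (designer's) and the outer (adversary's) problems in turn.

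For the inner problem, fix $u\in\mathcal{U}$: links with $u_{ij}=1$ drop out, so the designer minimizes $\sum_{\{i,j\}\in\mE(\mG(u))}(a_{ij}+v_{ij})f_{ij}$ over $v_{ij}\in[0,b]$ having at most $\ell$ nonzero entries. Each summand is affine in $v_{ij}$ with slope $f_{ij}$, hence strictly decreasing when $f_{ij}<0$, strictly increasing when $f_{ij}>0$, and constant when $f_{ij}=0$; so the designer never boosts a link with $f_{ij}\ge 0$, and on a link with $f_{ij}<0$ the gain from spending the full budget is $bf_{ij}$. Under the cardinality constraint the designer therefore sets $v_{ij}=b$ on the (at most) $\ell$ links of $\mG(u)$ with the most negative $f_{ij}$, i.e., exactly those indexed by $\mathcal{F}^\ell_\pi(\mG(u))$. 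Evaluated at the adversary's optimal $u^\star$ this is the asserted form of $v^\star$ with $\mG^\star=\mG(u^\star)$; the interval $[0,b]$ appears only when $f_{ij}=0$ or when several links tie at the $\ell$-th most negative value, cases in which the objective is insensitive to the choice.

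For the outer problem, substitute the designer's best response $v^\star(u)$ and let the adversary maximize $\Psi(u):=\sum_{\{i,j\}\in\mE(\mG(u))}(a_{ij}+v^\star_{ij}(u))f_{ij}$ over $u\in\mathcal{U}$. Deleting a link with $f_{ij}\ge 0$ removes a nonnegative term from $\Psi$ and can only push the designer onto a less favorable link, so it is never beneficial; hence an optimal $u^\star$ deletes only links with $f_{ij}<0$, which gives the branch $u^\star_{ij}=0$ for $f_{ij}>0$. For a link with $f_{ij}<0$, deleting it increases $\Psi$ by $|a_{ij}f_{ij}|$ if the designer would not have boosted it and by $|(a_{ij}+b)f_{ij}|$ if the designer would have, plus a further nonnegative amount when the freed-up designer budget gets reassigned to a less negative link. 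To pin down $u^\star$ I would run an exchange argument over the combined, ranked list $\mathcal{D}(\mG)=\mathcal{D}_1(\mG)\cup\mathcal{D}_2(\mG)$ of the values $a_{ij}f_{ij}$ and $(a_{ij}+b)f_{ij}$: show that whenever $u$ deletes a link outside $\mathcal{D}^\ell_\pi(\mG)$ while sparing one inside it, swapping the two deletions does not decrease $\Psi$ once the designer re-optimizes, and then iterate until the deleted set equals $\mathcal{D}^\ell_\pi(\mG)$. Ties at $\mathcal{D}_{\pi,\ell+1}(\mG)$ account for the $\{0,1\}$ freedom, and combining the three cases yields the stated $u^\star$.

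The routine parts are the MP bookkeeping and the inner minimization, which is just a greedy selection. The hard part will be the exchange argument in the outer problem: the adversary's marginal value of deleting a link is not a fixed number but depends on whether the designer would have boosted that link, and deleting a boosted link perturbs the \emph{entire} subsequent designer allocation, so one must verify that ranking by the interleaved list $\mathcal{D}(\mG)$ --- rather than by $a_{ij}f_{ij}$ alone or by $f_{ij}$ alone --- is precisely what accounts for this coupled best response, while correctly handling the degenerate cases $a_{ij}=0$, $f_{ij}=0$, and ties. A secondary point I would have to address is that, for this open-loop max--min (Stackelberg-type) problem, substituting a measurable best-response selection $v^\star(\cdot)$ back into the adversary's problem is legitimate, so that the pointwise max--min of the Hamiltonian is indeed the correct optimality condition.
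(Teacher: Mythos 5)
Your proposal follows essentially the same route as the paper: reduce via the MP to a pointwise max--min of $\sum_{j>i}A_{ij}f_{ij}$, solve the inner minimization greedily over the most negative $f_{ij}$'s (yielding $\mF_\pi^\ell(\mG^\star)$ and the lower bound $\sum b f_{ij}$), and determine the adversary's deletions by ranking the interleaved values $a_{ij}f_{ij}$ and $(a_{ij}+b)f_{ij}$. One remark: the exchange argument you flag as the hard part of the outer problem is not actually carried out in the paper either --- its displayed inequality only verifies that deleting the links in $\mD_\pi^\ell(\mG)$ improves on deleting nothing, rather than showing this set dominates every other choice of $\ell$ deletions once the designer re-optimizes --- so your sketch is, if anything, more demanding than what the paper supplies.
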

\begin{proof}
For both strategies, when $f_{ij}=0$, the MP does not tell us what the optimal action is. Hence, both players can select any action from their strategy spaces in this case, provided they do not exceed their budget constraints. Let us start by deriving $v^\star$. By (\ref{eqn::maxminH}), the terms $a_{ij}f_{ij}$ do not affect the minimization problem. Also, note that $\mG^\star$ contains the links for which $u_{ij}^* = 0$ only. Then, we can write
\begin{equation} \label{indepOfAij}
\underset{{v\in \mV}}{\operatorname{argmin}} \sum_{j>i}  A_{ij}f_{ij} = \underset{{v\in \mV}}{\operatorname{argmin}}\sum_{\substack{j>i \\ \{i,j\}\in \mE(\mG^\star)}}  v_{ij}f_{ij}.
\end{equation}
Now consider the sum on the right hand side. Being a minimizer, the designer should not amplify the positive $f_{ij}$'s; hence, he must set $v^\star_{ij}=0$ when $f_{ij}>0$. Then, by recalling the definition of $\mF_{\pi}^\ell(\mG^\star)$, we obtain
\begin{eqnarray*}
\sum_{\substack{j>i \\ \{i,j\}\in \mE(\mG^\star)}}  v_{ij}f_{ij} & = & \sum_{\substack{j>i\\ \{i,j\}\in \mF_{\pi}^\ell(\mG^\star)}}  v_{ij}f_{ij} + \sum_{\substack{j>i\\ \{i,j\}\notin \mF_\pi^\ell(\mG^\star)}}  v_{ij}f_{ij} \\
&\geq& \sum_{\substack{j>i\\ \{i,j\}\in \mF_{\pi}^\ell(\mG^\star)}}  bf_{ij}.
\end{eqnarray*}
Note that the last inequality can indeed be achieved because $|\mF_\pi^\ell(\mG^\star)| \leq \ell$. Hence, the optimal $v^\star$ is as claimed.

In this problem, the adversary has the first-mover-advantage and needs to dispose of the links that can reduce his utility. Because the adversary attempts to maximize the minimum of $p^TAx$, it follows that $u^\star_{ij} = 0$ when $f_{ij} > 0$. Let us now consider the links with $f_{ij} < 0$. The adversary knows that the designer attempts to make the negative $f_{ij}$'s smaller by adding $b$ to the corresponding edge weights. However, we cannot rule out the possibility that $(a_{lk}+b)f_{lk} > a_{ij}f_{ij}$. Hence, the adversary is not only interested in finding the smallest negative $(a_{ij}+b)f_{ij}$'s, but also needs to consider the $a_{ij}f_{ij}$'s themselves. It follows that the adversary needs to find the terms that can become very small (negative) and set $u_{ij}=1$ to the corresponding links. But those links are exactly the ones included in $\mD_\pi^\ell(\mG)$. Formally, and using the notation $A_{ij}^* = (a_{ij}+v_{ij}^\star)(1-u_{ij})$, we have
\begin{eqnarray*}
\sum_{j>i}  A_{ij}^\star f_{ij}  & = & \sum_{\substack{j>i\\ \{i,j\}\in \mD_{\pi}^\ell(\mG)}}  A^\star_{ij}f_{ij} + \sum_{\substack{j>i\\ \{i,j\}\notin \mD_{\pi}^\ell(\mG)}}  A^\star_{ij}f_{ij}\\
& \leq & \sum_{\substack{j>i\\ \{i,j\}\notin \mD_{\pi}^\ell(\mG)}}  (a_{ij}+v_{ij}^\star)f_{ij},
\end{eqnarray*}
where the last inequality follows from the fact that $f_{ij} < 0$ for all $\{i,j\}  \in  \mD_{\pi}^\ell(\mG)$. This confirms that $u^\star$ is as claimed.
\end{proof}
\subsection{The Min--Max Problem}
We now study the min--max problem. Similar to the above, we can write
\begin{eqnarray*} 
 \min_{v \in \mathcal{V}} \max_{u \in \mathcal{U}}H = k\left|x-\bar{x}\right|^2 +\min_{v \in \mathcal{V}} \max_{u \in \mathcal{U}}  \sum_{j>i}  A_{ij}f_{ij}. 
\end{eqnarray*}
We again note that one needs to be careful to observe that the minimization (or maximization) problem does not decouple due to the constraint on the controls of the two players. Although the adversary's strategy can be derived using an approach similar to the previous problem, the designer's strategy requires more care. Let $\mL(v) = \{(a_{ij}+v_{ij})f_{ij}<0: \{i,j\}\in \mE(\mG) \}$. Define $\mL_{\pi}(v)$ to be a ranking of the set $\mL(v)$ such that $\mL_{\pi,1}(v)\leq \hdots \leq \mL_{\pi,|\mL| }(v)$, and the set
\[
\mL_{\pi}^\ell(v) = \Phi_{\ell}\left( \left\{(a_{ij}+v_{ij})f_{ij}\leq \mL_{\pi,\ell+1}(\mG): \{i,j\}\in \mE(\mG) \right\} \right).
\]
As an abuse of notation, we let $\mL_{\pi,k}^\ell(v)$ correspond to the \emph{value} $(a_{ij}+v_{ij})f_{ij}$ associated with the link $\{i,j\}\in \mL_\pi^\ell(v)$. We assume that $\mL_{\pi,1}^\ell(v) \geq \hdots \geq \mL_{\pi,\ell}^\ell(v)$. Further, define the sets $\mP(v) = \{a_{ij}f_{ij}<0: \{i,j\} \notin \mL_{\pi}^\ell(v)\}$ such that $\mP_1(v)\leq  \hdots \leq \mP_{|\mP|}(v)$ and $\overline{\mP}(v) = \{f_{ij}<0: \{i,j\} \notin \mL_{\pi}^\ell(v)\}$ such that $\overline{\mP}_1(v)\leq  \hdots \leq \overline{\mP}_{|\overline{\mP}|}(v)$. We also define
\begin{eqnarray*}
[v_{\mS}(b)]_{ij} & = & \left\{
  \begin{array}{l l}
    b, &  \text{$\{i,j\} \in \mS$}  \\
    0, &  \text{$\{i,j\} \notin  \mS$}
  \end{array} \right.  
\end{eqnarray*}
The following theorem specifies the optimal strategies of both players.
\begin{theorem}
The optimal strategy for the adversary in the min--max problem is
\begin{eqnarray*}
u^\star_{ij}(t) & = & \left\{
  \begin{array}{l l}
    1, &  \text{$\{i,j\} \in \mL_{\pi}^{\ell}(v^\star)$}  \\
    0, &  \text{$\{i,j\} \notin  \mL_{\pi}^{\ell}(v^\star)$ or $f_{ij} > 0$} \\
    \{0,1\}, & \text{otherwise}
  \end{array} \right.
\end{eqnarray*}  
For the network designer, the optimal strategy is to run Algorithm \ref{algorithm} and set $v_{ij}^\star =0$ when $f_{ij}>0$ and $v_{ij}^\star \in [0,b]$ if $f_{ij} = 0$. 
\end{theorem}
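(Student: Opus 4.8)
The plan is to reproduce the two–stage structure of the proof of Theorem~\ref{thm::maxmin}: for a frozen designer control $v$, first solve the inner maximization over $u$, substitute the resulting value into the reduced objective $\min_{v\in\mathcal V}\max_{u\in\mathcal U}\sum_{j>i}A_{ij}f_{ij}$ derived above, and then attack the outer minimization over $v$; throughout I use $A_{ij}=(a_{ij}+v_{ij})(1-u_{ij})$ and the fact, established in Section~\ref{implement}, that $N_u^\star=N_v^\star=\ell$. For fixed $v$, setting $u_{ij}=1$ deletes the term $(a_{ij}+v_{ij})f_{ij}$ from $\sum_{j>i}A_{ij}f_{ij}$; a maximizer never deletes a nonnegative term, so $u^\star_{ij}=0$ when $f_{ij}>0$, the term vanishes and the MP leaves the action free when $f_{ij}=0$ (the $\{0,1\}$ case), and among links with $(a_{ij}+v_{ij})f_{ij}<0$ deleting the $\ell$ most negative ones gives the largest increase — and these are precisely the links of $\mL_\pi^\ell(v)$ (all of them if there are fewer than $\ell$). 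This is the claimed form of $u^\star$, and along any optimal pair
\[
\max_{u\in\mathcal U}\sum_{j>i}A_{ij}f_{ij}=\sum_{\{i,j\}\notin\mL_\pi^\ell(v)}(a_{ij}+v_{ij})f_{ij}=:G(v).
\]

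Next I would peel off the easy coordinates of the designer's control. A link with $f_{ij}>0$ has $(a_{ij}+v_{ij})f_{ij}>0$ for every $v_{ij}\ge 0$, so it can never enter $\mL_\pi(v)$ and always contributes $f_{ij}$ times $(a_{ij}+v_{ij})$ to $G(v)$; a minimizer therefore sets $v^\star_{ij}=0$, while coordinates with $f_{ij}=0$ do not appear in $G$ and may be chosen freely in $[0,b]$. Hence the outer problem reduces to distributing the $\ell$ link–slots among the edges with $f_{ij}<0$, and the remaining task is to show that this distribution coincides with the output of Algorithm~\ref{algorithm}.

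This last step is the crux, and the main obstacle, because $v$ enters $G$ both through the surviving coefficients $a_{ij}+v_{ij}$ and, nonlinearly, through the set $\mL_\pi^\ell(v)$, so $G$ is neither separable nor convex and the minimization does not decouple across links. I would attack it by an exchange argument on the edges with $f_{ij}<0$, built on the trichotomy visible from $G(v)=\sum_{j>i}a_{ij}f_{ij}+\sum_{j>i}v_{ij}f_{ij}-\sum_{\{i,j\}\in\mL_\pi^\ell(v)}(a_{ij}+v_{ij})f_{ij}$: boosting an edge that the adversary deletes both before and after changes $G$ by $0$; boosting an edge that stays outside $\mL_\pi^\ell(v)$ strictly lowers $G$ by $v_{ij}f_{ij}<0$, so it should be raised all the way to $b$ (this is what $\mP(v)$, $\overline{\mP}(v)$ and $v_{\mS}(b)$ bookkeep); and boosting an edge just enough to pull it \emph{into} $\mL_\pi^\ell(v)$ trades its surviving value $a_{ij}f_{ij}<0$ for $0$ but evicts the previously least–negative deleted edge and restores that edge's negative value to the sum, which — since the pushed edge was not among the bottom $\ell$ — is weakly profitable and needs only a partial boost, and this is where the interior values $v^\star_{ij}\in[0,b]$ come from.

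Using this trichotomy I would then show that any optimal $v$ can be transformed, without increasing $G$, into one generated by the greedy procedure that repeatedly boosts the surviving negative edge with the most negative $f_{ij}$ — to $b$, or only up to the relevant threshold when $b$ would over–shoot and deleting the edge is not advantageous — and recomputes the adversary's response $\mL_\pi^\ell(\cdot)$, until all $\ell$ slots are used; identifying this greedy procedure with Algorithm~\ref{algorithm}, together with routine checks of ties in the rankings, the case of fewer than $\ell$ negative edges, the $f_{ij}=0$ edges, and the $\mL_{\pi,k}^\ell$ ordering convention, completes the proof. The one lemma worth isolating first is the trichotomy above, since it simultaneously pins down the support and the interior values of $v^\star$ and collapses the continuum of designer actions into the finite comparison that Algorithm~\ref{algorithm} carries out.
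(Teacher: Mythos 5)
Your treatment of the adversary is the same as the paper's: for frozen $v$, decompose $\sum_{j>i}A_{ij}f_{ij}$ over $\mL_\pi^\ell(v^\star)$ and its complement, observe that deleting a term is only profitable when $(a_{ij}+v_{ij})f_{ij}<0$, and that deleting the $\ell$ most negative such terms attains the upper bound; the peeling off of the $f_{ij}>0$ and $f_{ij}=0$ coordinates of $v$ also matches. The gap is in the designer's half. First, your claim that interior values $v^\star_{ij}\in(0,b)$ arise because an edge ``needs only a partial boost'' to enter $\mL_\pi^\ell(v)$ contradicts both the theorem (which allows interior values only when $f_{ij}=0$) and Algorithm \ref{algorithm} (which only ever sets $v_{ij}=b$). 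This is not merely cosmetic: if partial boosts were ever strictly necessary, the stated strategy would be suboptimal. What you need instead is a bang--bang sufficiency observation --- once an edge is inside the deleted set its weight no longer appears in $G(v)$, so pushing it all the way to $b$ is weakly optimal and the search can be restricted to $v_{ij}\in\{0,b\}$.

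Second, the crucial step --- that your greedy procedure coincides with Algorithm \ref{algorithm} --- is asserted, not argued, and as described they are not the same procedure. Your greedy ``repeatedly boosts the surviving negative edge with the most negative $f_{ij}$,'' which is only the fallback branch (steps 7--8) of the algorithm. Algorithm \ref{algorithm} instead first searches, for $i=\ell$ down to $1$, for a sacrificial set $\mS\subset\mP(0)$ of size $i$ whose boosting evicts $\mL_{\pi,i}^\ell(0)$ from the bottom-$\ell$ ranking, and only then spends the remaining $\ell-i$ slots on $\Phi_{\ell-i}\bigl(\overline{\mP}(v_\mS(b))\bigr)$. The whole difficulty of the outer minimization is the tradeoff between the two profitable branches of your trichotomy --- spending a slot on a direct gain of $b f_{ij}$ on a surviving edge versus spending it on an eviction that swaps a deleted edge's value back into the sum --- and your sketch never resolves which wins, nor why the eviction sets should be drawn from $\mP(0)$ (ranked by $a_{ij}f_{ij}$) while the leftover slots are ranked by $f_{ij}$ alone via $\overline{\mP}$; the paper justifies the latter by the fact that the $a_{ij}$'s drop out of the minimization as in (\ref{indepOfAij}) except when altering the ranking. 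The paper's own proof handles this by arguing directly that protecting the most negative target $\mL_{\pi,i}^\ell(0)$ via a sacrificial $\mS$ can only improve the designer's utility, case-splitting on whether members of $\mS$ already lie in $\mL_\pi^\ell(0)$. Your exchange-argument framing could in principle be made to work, but as written the identification with Algorithm \ref{algorithm} is the theorem, and it is missing.
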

\begin{table}[!t]
 \centering
 \caption{
   Algorithm I: Computing the optimal strategy for the minimizer in the min--max problem.}\vspace*{-1em}
   \begin{tabular}{p{7cm}}
     \hline \\
0:\hspace*{1em} \textbf{input:} \text{a strategy $v$ with $N_v = 0$}\vspace*{.2em}\\
1:\hspace*{1em} \textbf{for $i = \ell \downarrow  1$}\vspace*{.2em}\\
2: \hspace*{2em} \textbf{if} $\exists \mS \subset \mP(0), |\mS| = i$, $\mL_{\pi,i}^\ell(0) \notin \mL_{\pi}^{\ell}(v_\mS(b))$\vspace*{.1em}\\
3:\hspace*{3em} Set $v_{ij} = b$, $\forall \{i,j\} \in \Phi(\mS) \cup \Phi_{\ell-i}\left(\overline{\mP}(v_\mS(b))\right)$.\vspace*{.1em}\\
4:\hspace*{3em} \textbf{Exist} for loop.\vspace*{.1em}\\
5:\hspace*{2em}\textbf{end}\vspace*{.1em}\\
6:\hspace*{1em}\textbf{end}\vspace*{.1em}\\ 
7:\hspace*{1em}\textbf{if} $N_v =0$ \vspace*{.1em}\\
8:\hspace*{2em} Set $v_{ij} = b$ for all $\{i,j\} \in \Phi_{\ell}\left(\overline{\mP}(0)\right)$.\vspace*{.1em}\\
9:\hspace*{1em}\textbf{end}\vspace*{.1em}\\ \\
  \hline
   \end{tabular}\label{tab:algo}\vspace{-0.5cm}
   \label{algorithm}
\end{table}

\begin{proof}
The same reasoning as in Thm \ref{thm::maxmin} applies to the case when $f_{ij}\geq0$. We start by deriving $u^\star$. By recalling the definition of $\mL_{\pi}^\ell(v^\star)$, we can write
\begin{eqnarray*}
\sum_{j>i}  A_{ij}^\star f_{ij} & = & \sum_{\substack{j>i\\ \{i,j\}\in \mL_{\pi}^\ell(v^\star)}}  A^\star_{ij}f_{ij} + \sum_{\substack{j>i\\ \{i,j\}\notin \mL_{\pi}^\ell(v^\star)}}  A^\star_{ij}f_{ij}\\
& \leq & \sum_{\substack{j>i\\ \{i,j\}\notin \mL_{\pi}^\ell(v^\star)}}  (a_{ij}+v_{ij}^\star)f_{ij},
\end{eqnarray*}
where the last inequality follows from the fact that $f_{ij} < 0$ for all $\{i,j\}  \in  \mL_{\pi}^\ell(v^\star)$. This confirms that $u^\star$ is as claimed. Also, note that the last inequality can be achieved because $|\mL_\pi^\ell(v^\star)| \leq \ell$ by construction. 

As for the designer, he can now exploit the first-mover-advantage since he is acting first. The main goal of the designer in this case is to try to protect the links in $\mL_{\pi}^\ell(0)$. These links will be disconnected by the adversary unless the designer is able to modify the ranking of the links such that the links (or a subset of them) in $\mL_\pi^\ell(0)$ are not in $\mL_\pi^\ell(v^\star)$. In essence, this is what Algorithm \ref{algorithm} attempts to achieve. This is demonstrated in Fig. \ref{fig::deception}.
\begin{figure}[!t]
\centering
\includegraphics[width=8cm]{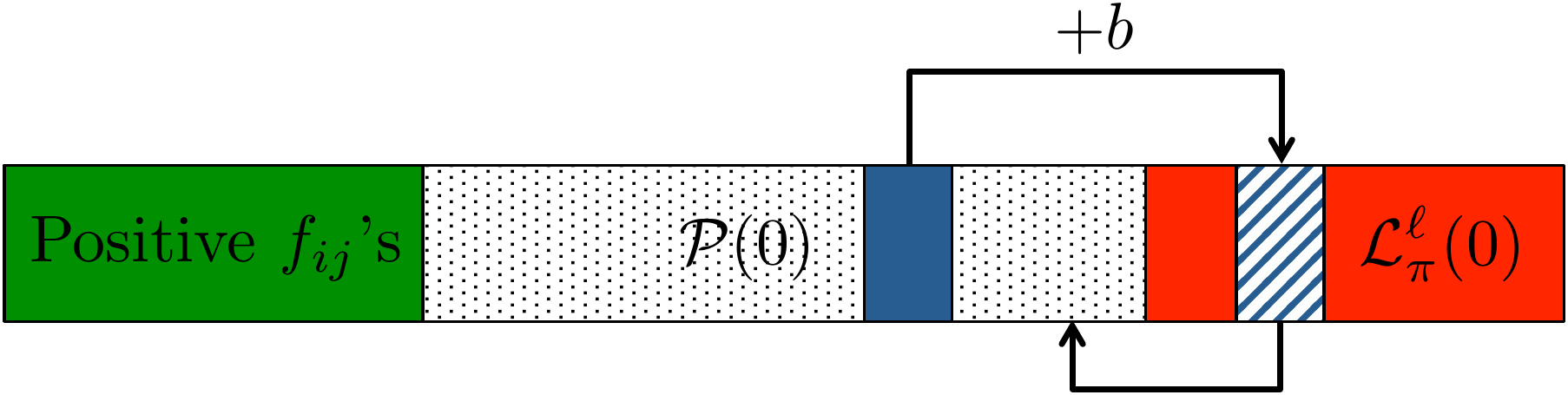}
\caption{A demonstration of how the designer can alter the ranking of the $f_{ij}$'s to improve his utility.}
\label{fig::deception}
\end{figure}
Being the lowest negative value, and hence the link both the adversary and the designer are interested in, let us explore how the designer can push $\mL_{\pi,\ell}^\ell(0)$ higher in the ranking of the link values. The designer can achieve this if under some strategy $v$, the value $\mL_{\pi,\ell}^\ell(0)$ is no longer among the lowest $\ell$ negative values; in other words, the designer can alter the ranking if there is a set of $\ell$ links $\mS \subset \mP(0)$ such that when he sets $v_{ij} = b$ for all links in $\mS$, there will be $\ell$ values that are smaller than $\mL_{\pi,\ell}^\ell(0)$ (steps $2$ and $3$ in Algorithm \ref{algorithm}). The adversary will then break the links in $\mS$ and will spare the link corresponding to $\mL_{\pi,\ell}^\ell(0)$ as required. To see why this is optimal, consider the type of links that can be in $\mS$.
\begin{itemize}
\item If a link in $\mS$ is also in $\mL_\pi^\ell(0)$, then this is optimal due to the fact that the adversary will disconnect that link since it is in $\mL_\pi^\ell(0)$. Hence, if the designer can utilize this link to modify the ranking and protect a link that is more negative ($\mL_{\pi,\ell}^\ell(0)$ in this case), then this can only improve his utility. The same reasoning applies if more than one of the links in $\mS$ are also in $\mL_\pi^\ell(0)$.
\item If none of the links in $\mS$ is in $\mL_\pi^\ell(0)$, then necessarily some of the links in $\mL_\pi^\ell(0)$ will also be protected along with the link corresponding to $\mL_{\pi,\ell}^\ell(0)$. This is because $|\mS| = \ell$, and the adversary can only break $\ell$ links. Hence, this scenario is more favorable to the designer than the previous one and can therefore only improve his utility.
\end{itemize}
If such an $\mS$ exists, then the designer has exhausted all possible moves, since $|\mS| = \ell$, and the algorithm terminates (step $4$ of the algorithm). Otherwise, If no such set exists in $\mP(0)$, then the designer should try to protect the next most negative link whose value is precisely $\mL_{\pi,\ell-1}^\ell(0)$ by finding a set $\mS$ of of size $\ell-1$. Since $\mL_{\pi,\ell-1}^\ell(0) \geq \mL_{\pi,\ell}^\ell(0)$, the link corresponding to $\mL_{\pi,\ell}^\ell(0)$ along with $\mS$ will constitute the set of $\ell$ links that the adversary will break. Then, the designer should set $v_{ij}=b$ for all the links in $\mS$, and for the remaining action the designer should select the link with the most negative $f_{ij}$ that is \emph{not} in $\mL_\pi^\ell(v_\mS(b))$; this is precisely the set $\Phi_{1}\left(\overline{\mP}(v_\mS(b))\right)$ (step $3$ of the algorithm). The reason behind searching in $\overline{\mP}(v_\mS(b))$ and not in $\mP(v_\mS(b))$ after finding $\mS$ is that the $a_{ij}$'s only affect the utility of the designer when he attempts to alter the ranking. Otherwise, the edge weights do not affect the utility of the minimizer as per (\ref{indepOfAij}). 

This procedure then repeats until the designer has tried to protect all the links in $\mL_\pi^\ell(0)$. If the designer fails in protecting \emph{all} the links in $\mL_\pi^\ell(0)$, then we must have $N_v = 0$, i.e., the input strategy was not altered. Then, the optimal strategy is to set $v_{ij} =b$ for the links with most negative $f_{ij}$'s in $\overline{\mP}(0)$ (steps $7$ and $8$ in Algorithm \ref{algorithm}).
\end{proof}

\subsection{Complexity of the Optimal Strategies}
Let us now study the complexity of the optimal strategies. We first start by the max--min problem. Assuming as in Remark \ref{rem::complex} that the players switch their strategies a total of $K$ times over $[0,T]$, we conclude that the worst-case complexity of the strategy of both players is $\mathcal{O}(K\cdot m\log m)$ as their strategies involve merely the ranking of sets of size at most $2m$. As for the min--max problem, the complexity of the adversary's strategy is $\mathcal{O}(K\cdot m\log m)$. The main bottleneck in the strategy of the designer is step $2$ in Algorithm \ref{algorithm}. The size of the set $\mP(0)$ is at most $m-\ell$; thus, the worst-case complexity for the designer is $K \cdot \sum_{i=1}^{m-\ell}{m-\ell \choose i} \approx K\cdot \sum_{i=1}^\ell(m-\ell)^i$. By comparison with (\ref{bruteForce}), we conclude that the derived optimal strategies achieve vast complexity reductions.

\section{Implementing the Optimal Strategies} \label{implement}
The optimal strategies we derived in Section \ref{Problems} are defined in terms $f_{ij}$'s which depend on the state $x(t)$ and the costate $p(t)$. However, we have not derived the optimal trajectories that satisfy the canonical equations given by the MP in (\ref{eqn::ODEp1}) and (\ref{eqn::ODEx1}). Since the system is linear time-varying, the solutions will be given in terms of a state transition matrix; this makes working with $f_{ij}$ intractable. The following theorem provides a procedure to arrive at the optimal solutions without the need to compute $p(t)$. Parts of the proof of this theorem appeared in \cite{KhanaferTouriBasarNecSys12}. Define $\nu_{ij} = -(x_i-x_j)^2$. 
\begin{theorem} \label{thm::implement}
The rankings performed as part of the optimal strategies of the max--min and min--max problems can be carried out by replacing $f_{ij}(t)$ by $\nu_{ij}(t)$. Further, it is optimal for the players to modify a total of $\ell$ links.
\end{theorem}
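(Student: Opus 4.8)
The plan is to eliminate the costate and reduce the statement to a proportionality property along the optimal trajectory. First, solving the linear adjoint equation~(\ref{eqn::ODEp1}) backward from the terminal condition $p(T)=0$ expresses $p(t)$ as a matrix-weighted integral of $x(s)-\bar x$ over $s\in[t,T]$; because each $A(s)$ is symmetric, has nonnegative off-diagonal entries, and satisfies $A(s)\mathbf 1=0$, the state-transition matrix $\Psi(s,t)$ of~(\ref{eqn::ODEx1}) is doubly stochastic with $\Psi(s,t)\mathbf 1=\mathbf 1$, so $x(s)-\bar x=\Psi(s,t)(x(t)-\bar x)$ and the integral collapses to $p(t)=G(t)\,(x(t)-\bar x)$, where $G(t)=2\int_t^T k(s)\,\Psi(s,t)^{T}\Psi(s,t)\,ds$ is symmetric, positive semidefinite, and has constant row sums. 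Writing $w=x(t)-\bar x$, the two families the strategies rank become
\[
f_{ij}\;=\;-\big[(e_i-e_j)^{T}G(t)\,w\big]\big[(e_i-e_j)^{T}w\big],\qquad \nu_{ij}\;=\;-\big[(e_i-e_j)^{T}w\big]^{2}.
\]

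Second --- the heart of the proof --- I would show that along the optimal trajectory $p(t)=c(t)\,(x(t)-\bar x)+\mu(t)\mathbf 1$ for scalars $c(t)>0$ and $\mu(t)$, so that $f_{ij}=c(t)\,\nu_{ij}$ with one common positive factor. Granting this, every set feeding the optimal strategies --- $\mF(\mG'),\mD_1,\mD_2,\mL(v),\mP(v),\overline{\mP}(v)$ and their $\pi$-rankings, all built from $f_{ij}$, $a_{ij}f_{ij}$, $(a_{ij}+b)f_{ij}$, $(a_{ij}+v_{ij})f_{ij}$ --- is unchanged when $f_{ij}$ is replaced by $\nu_{ij}$, since a common positive rescaling reorders nothing and the sign pattern ($f_{ij}\le 0$, with equality iff $x_i=x_j$) is preserved; hence $u^\star$, $v^\star$, and every step of Algorithm~I are literally unchanged. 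Moreover, because $f_{ij}\le 0$ on every edge while $m\ge\ell$, a player can always spend its (weakly beneficial) modifications on the $\ell$ links with most negative $f_{ij}$ --- i.e. largest $(x_i-x_j)^2$ --- so touching exactly $\ell$ links is a valid optimal response for each player, with the leader's additional deletions or boosts at worst relocating the follower's response to a less extreme link (the ``deception'' already exploited in Algorithm~I); this gives $N_u^\star=N_v^\star=\ell$.

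The hard part is exactly the proportionality $p(t)=c(t)\,(x(t)-\bar x)+\mu(t)\mathbf 1$, which is presumably the portion that \cite{KhanaferTouriBasarNecSys12} supplies for the adversary-only problem. Substituting this ansatz into $\dot p=-2kw-A^\star p$ and projecting onto $\mathbf 1$ and onto $\mathbf 1^\perp$ reduces it to the identity $\dot c\,w+2c\,A^\star w+2k\,w=0$ together with $\dot\mu=0$ --- that is, it requires $A^\star(t)\,w(t)$ to be collinear with $w(t)$. The terminal behaviour $p(t)\approx 2k(T)(T-t)(x(T)-\bar x)$ (from $\dot p(T)=-2k(T)(x(T)-\bar x)$) shows the ansatz holds near $t=T$ and pins the base case $c(T^-)>0$, and the remaining task is to propagate it backward in $t$ along the optimal trajectory. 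This step is delicate: $G(t)$ is a genuine operator, not a scalar, and for general graphs $A^\star w$ need not be collinear with $w$, so I expect the argument to need either extra structure of the optimal $A^\star$ (the proportionality is automatic when the disagreement vector stays in a single eigendirection, e.g. on a complete graph or for $n=2$) or a retreat to the weaker statement that only the $\ell$ most negative $f_{ij}$ coincide \emph{as links} with the $\ell$ largest $(x_i-x_j)^2$ --- which is all that $\mF_\pi^\ell$, $\mD_\pi^\ell$, $\mL_\pi^\ell$, and Algorithm~I ultimately consume.
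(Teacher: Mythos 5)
Your proposal hinges on the proportionality $p(t)=c(t)\,(x(t)-\bar x)+\mu(t)\mathbf 1$ with $c(t)>0$, which would give $f_{ij}=c(t)\nu_{ij}$ and make the replacement trivial. As you concede in your final paragraph, this ansatz requires $A^\star(t)w(t)$ to remain collinear with $w(t)=x(t)-\bar x$, which fails for general graphs: the operator $G(t)=2\int_t^T k(s)\Psi(s,t)^T\Psi(s,t)\,ds$ in your representation $p=Gw$ is genuinely matrix-valued, and there is no reason for $-[(e_i-e_j)^TGw][(e_i-e_j)^Tw]$ and $-[(e_i-e_j)^Tw]^2$ to induce the same edge ranking. (Your side claim that $f_{ij}\le 0$ on every edge is also unsupported --- the optimal strategies in the paper explicitly prescribe actions for the case $f_{ij}>0$, so the sign cannot be taken for granted, and your argument for $N_u^\star=N_v^\star=\ell$ leans on it.) The ``retreat'' you mention --- that the $\ell$ most negative $f_{ij}$ coincide \emph{as links} with the $\ell$ largest $(x_i-x_j)^2$ --- is essentially the content of the theorem itself, and you offer no argument for it. So the proof is not closed: the one step you identify as the heart of the argument is the step that is false in general.

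The paper closes this gap by abandoning the costate altogether and arguing by a direct exchange on the cost functional. Suppose the designer boosts link $\{k,l\}$ rather than $\{i,j\}$ even though $\nu_{ij}<\nu_{kl}$; freezing the strategies on a short interval $[t_0,t_0+\delta]$ and swapping at an interior time $t^\star$, a first-order expansion of the resulting cost difference gives $2b(t-t^\star)\left(\nu_{ij}(t_0)-\nu_{kl}(t_0)\right)+\mathcal{O}(\delta^2)<0$ for small $\delta$, and a third step (having the perturbed strategy mimic the original one after $t_0+\delta$) shows the gain persists to the horizon $T$. This establishes directly that ranking by $\nu_{ij}$ is optimal, without ever claiming that the MP's $f_{ij}$-ordering agrees with the $\nu_{ij}$-ordering pointwise in time; the claim that exactly $\ell$ links should be modified is imported from a separate result in \cite{KhanaferTouriBasarNecSys12} rather than derived from a sign condition on $f_{ij}$. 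If you want to salvage your route, you must prove the link-level coincidence of the two rankings along the optimal trajectory, and the natural way to do that is precisely this perturbation argument.
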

\begin{proof}
We will provide the main ideas behind proving this claim for the max--min problem -- the same arguments will apply to the min--max problem. For a fixed strategy of the adversary $u^\star$, we will show that it is optimal for the minimizer to rank the links based on their $\nu_{ij}$ values instead of the $f_{ij}$'s. Then, by applying the same reasoning as in Thm \ref{thm::maxmin}, we arrive at $u^\star$ with the ranking performed using $\nu_{ij}$. 

The main complication in solving the adjoint equations is that the system is time-varying. However, since $x(t),p(t) \in C^1[0,T]$, the value of $f_{ij}$ cannot change abruptly in a finite interval. As a result, the control obtained from the MP cannot switch infinitely many times in a finite interval. Hence, we can regard the system as a time-invariant one over a small interval $[t_0,t_0+\delta] \subset [0,T]$, $\delta > 0$. The proof consists of three steps.
\begin{itemize}
\item[1.] Show that it is optimal for the players to change $\ell$ links.
\item[2.] Show that, over a small interval $[t_0,t_0+\delta]$, it is optimal for the designer to switch from a strategy $v^A$ to a strategy $v^B$ that entails ranking the links based on their $\nu_{ij}$ values.
\item[3.] Show that mimicking $v^A$ for the remaining time of the problem preserves the gain obtained over $[t_0,t_0+\delta]$.
\end{itemize}
The first step was proven in \cite{KhanaferTouriBasarNecSys12} for the maximization problem. In the min--max (or max--min) case, the proof remains the same. This is because for a fixed $u^\star$, then by Thm 1 in \cite{KhanaferTouriBasarNecSys12}, it follows that it is optimal for the designer to modify $\ell$ links\footnote{Conditions were given in \cite{KhanaferTouriBasarNecSys12} as to when it is also optimal for a player to change less than $\ell$ links.}. By Thm \ref{thm::maxmin}, we deduce that $N_{u^\star} = \ell$.

We want to show that the designer will modify the $\ell$ links with the lowest $\nu_{ij}$ values. Assume that the designer applies a stationary strategy $v^A$ over the interval $[t_0,t_0+\delta]$ with a corresponding system matrix $A$. Let $P(t):=e^{At}$. Due to the structure of $A$, $P(t)$ is a doubly stochastic matrix for $t \geq 0$. Because the control strategies are time-invariant, the state trajectory is given by
\[
x(t) = e^{A(t-t_0)}x(t_0), \quad t \in [t_0,t_0+\delta].
\]
Assume that the links the designer modifies over this interval are not the ones with the lowest $\nu_{ij}$ values. In particular, assume that the designer sets $v^A_{ij}=0$ and $v^B_{kl} =b$ although $\nu_{ij} < \nu_{kl}$. At time $t^\star \in [t_0,t_0+\delta]$, we assume that the designer switches to strategy $v^B$ by setting $v^B_{ij}=b$ and $v^B_{kl} =0$. Let the matrix $B$ be the system matrix corresponding to $v^B$, and define the doubly stochastic matrix $Q(t) := e^{Bt}$, $t\geq 0$. Showing that this switch can improve the utility of the designer is equivalent to proving the following inequality \cite{KhanaferTouriBasarNecSys12}:
\begin{equation}
\int_{t^\star}^{t_0+\delta} k(t)\cdot x(t_0)^T\Lambda(t,t^\star)x(t_0) dt < 0, \label{ineq::toProve}
\end{equation}
where $\Lambda(t,t^\star)= P(t^\star-t_0)Q(2(t-t^\star))P(t^\star-t_0) - P(2(t-t_0))$. A sufficient condition for (\ref{ineq::toProve}) to hold is
\begin{equation*}
h(t,x(t_0)) = x(t_0)^T\Lambda(t,t^\star)x(t_0)<0, \text{ for } t> t^\star.
\end{equation*}
As $t \downarrow 0$, we can write $P(t) = I + tA+\mathcal{O}\left(t^2\right)$, where $\mathcal{O}\left(t^2\right)/t \leq L$ for some finite constant $L$. Following similar steps to those in \cite{KhanaferTouriBasarNecSys12}, we can write
\begin{eqnarray*}
h(t,x(t_0)) & = & 2(t-t^\star) \sum_{r>s}(A_{sr} - B_{sr})\left(x_r(t_0)-x_s(t_0)\right)^2 \nonumber \\
& = & 2b(t-t^*)(\nu_{ij}(t_0) - \nu_{kl}(t_0)) + \mathcal{O}(\delta^2). 
\end{eqnarray*}
For small enough $\delta$, the higher order terms are dominated by the first term. Hence, if there are links $\{i,j\}$ and $\{k,l\}$ such that $\nu_{ij}\leq\nu_{kl} $, there exists $t^\star$ such that $h(t,x(t_0))<0$ for $t \in \left(t^\star,t_0+\delta\right]$. This proves that the optimal strategy for the designer is to break the links with the lowest $\nu_{ij}$ values. By comparison with the statement of Thm \ref{thm::maxmin}, we conclude that we can replace $f_{ij}$ by $\nu_{ij}$ while performing the required rankings.

The final step of the proof is to show that switching to strategy $B$ guarantees an improved utility for the adversary \emph{regardless of how the original trajectory $A$ changes beyond time $t_0+\delta$}. To this end, we will assume that from time $t_0+\delta$ onward, strategy $B$ will mimic strategy $A$. Assume that strategy $A$ switches to another strategy $C$ starting at time $t_0+\delta$. Hence, strategy $B$ will also switch to strategy $C$, but the trajectories will have a different initial conditions. Fig. \ref{fig::proofSketch} illustrates this idea. Consider the behavior of the system over $[t_0+\delta, t_0 + 2\delta]$ over which we can assume that the system is time-invariant. One can show that it suffices to prove that \cite{KhanaferTouriBasarTAC13}
\begin{eqnarray*}
\int_{t_0+\delta}^{t_0 + 2\delta} k(t)\cdot c(t) dt < 0,
\end{eqnarray*}
where $c(t)$ is the difference between the utilities resulting from the trajectory that includes a switch to strategy $B$ at time $t^\star$ and the one that does not switch at time $t^\star$. After performing a first-order Taylor expansion, $c(t)$ can be shown to be equal to \cite{KhanaferTouriBasarTAC13}
\[
c(t)= 2b\left(t_0+\delta -\ts\right)(\nu_{ij}(t_0)-\nu_{kl}(t_0)) + \mathcal{O}(\delta^2),
\]
which is negative for small $\delta$. Hence, the gain that was obtained by switching to strategy $B$ at time $t^\star$ is preserved and the proof is complete.
\begin{figure}[!t]
\centering
\includegraphics[width=8cm]{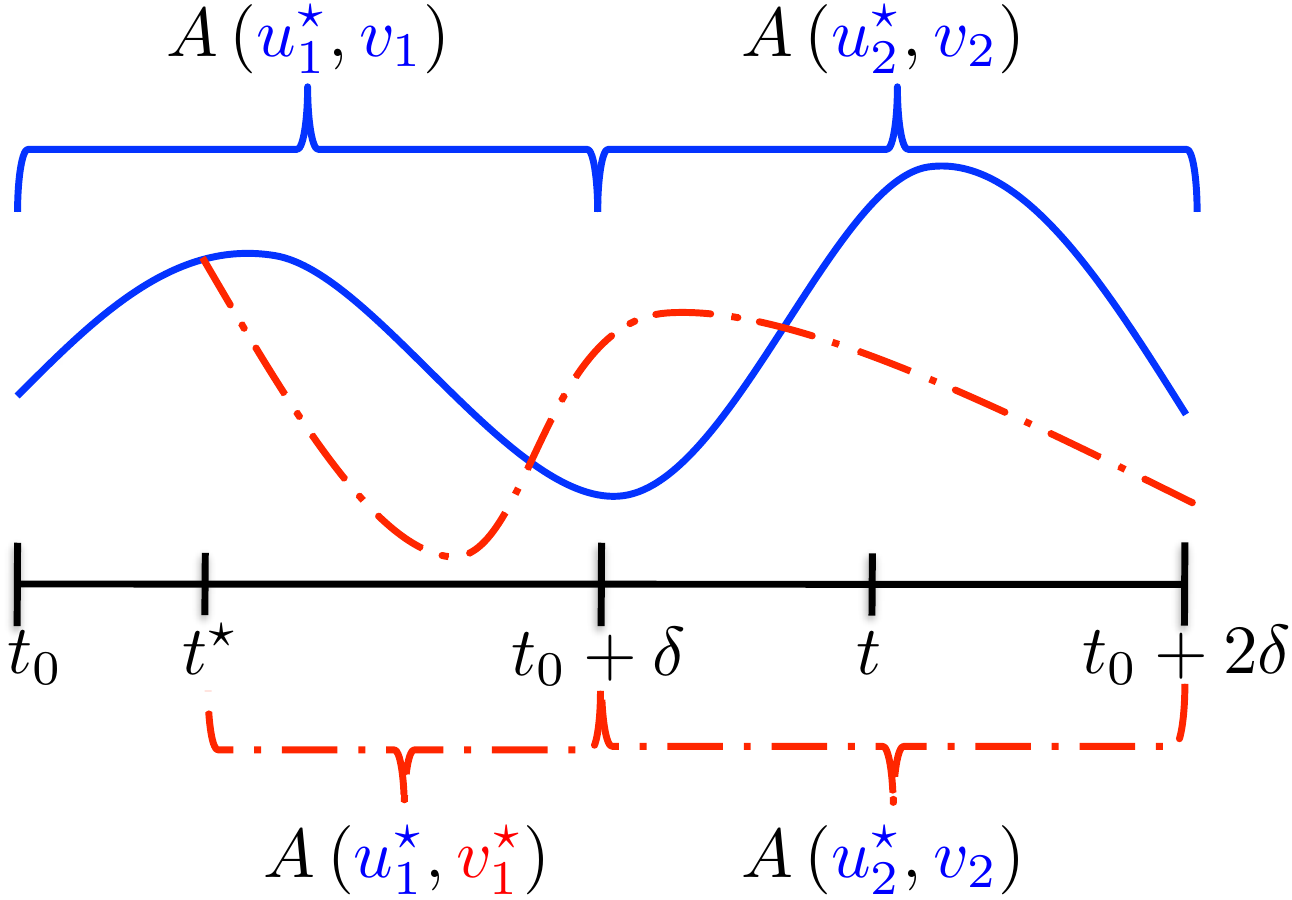}
\caption{A demonstration of the technique used in the third step of the proof.}
\label{fig::proofSketch}
\end{figure}
\end{proof}
\begin{remark}
\emph{(Potential-Theoretic Analogy)} As discussed in \cite{KhanaferTouriBasarNecSys12}, when the graph is viewed as an electrical network, $a_{ij}+v_{ij}$ can be viewed as the conductance of link $\{i,j\}$ and $x_i-x_j$ as the potential difference across the link. Therefore, the optimal strategy of the designer in both problems involves finding the links with the highest potential difference (or the lowest $\nu_{ij}$'s) and increasing the conductance of those links by setting $v_{ij} = b$. This leads to increasing the power dissipation across those links, which translates to increasing the information flow across the network and results in faster convergence. The optimal strategy of the adversary should therefore involve breaking the links with highest power dissipation. But power dissipation is given by $(a_{ij}+v_{ij})(x_i-x_j)^2$, and this is exactly what the adversary targets according to Thm \ref{thm::implement}.
\end{remark}
\section{A Sufficient Condition for the Existence of an SPE} \label{suffCond}
Having solved the min--max and max--min problems, it remains to verify whether the value of these problems can be equal, even though the strategy sets of the players are non-rectangular. The reason that the upper and lower values are different is mainly due to the ability of the minimizer to \emph{deceive} the maximizer by altering the ranking of the most negative values. If we remove this ability from the network designer, we should expect that an SPE would exist. The following theorem makes this argument formal. Define $\gamma :=\frac{M}{\epsilon^2}$, where $M$, $\epsilon$ are defined in Lemmas \ref{lemma::upperBound} and \ref{lemma::lowerBound} in the Appendix. We assume that $\epsilon$ is chosen to guarantee $\gamma > 1$.
\begin{theorem}
A sufficient condition for the upper and lower values $\overline{V},\underline{V}$, defined in (\ref{eqn::upperlowerV}) to be equal, and hence for the existence of an SPE, is to select $b$ such that
\begin{equation} \label{eqn::sufCond}
0 \leq b \leq \min_{\{i,j\}, \{k,l\} \in \mE} \left| \gamma a_{ij} - a_{kl} \right|,
\end{equation}
given that $a_{ij} \neq a_{kl}$ and $a_{ij} > \gamma a_{kl}$ whenever $a_{ij} > a_{kl}$, for all $\{i,j\},\{k,l\}\in \mE$.
\end{theorem}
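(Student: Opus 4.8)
The plan is to show that condition (\ref{eqn::sufCond}) strips the designer of any \emph{deception} capability, so that in the min--max problem Algorithm \ref{algorithm} never enters its deception branch (steps 2--4) and the two problems end up prescribing identical play. By Theorem \ref{thm::implement} I may carry out every ranking in both problems with the quantities $\nu_{ij}(t)=-(x_i(t)-x_j(t))^2$ in place of the $f_{ij}(t)$, and each player modifies exactly $\ell$ links; this is the setting in which I work throughout. The combinatorial content to isolate first is this: inspecting the min--max theorem and Algorithm \ref{algorithm}, the designer benefits from deception only when, for some admissible $v$, the adversary's target set $\mathcal{L}_\pi^\ell(v)$ differs from $\mathcal{L}_\pi^\ell(0)$, i.e.\ when boosting at most $\ell$ edge weights by $b$ ejects some link from it. Hence the entire theorem reduces to a \emph{ranking-stability} statement: under (\ref{eqn::sufCond}), $\mathcal{L}_\pi^\ell(v)=\mathcal{L}_\pi^\ell(0)$ for every $v\in\mathcal{V}$, and likewise $\mathcal{D}_\pi^\ell(\mathcal{G})=\mathcal{L}_\pi^\ell(0)$.

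To obtain ranking stability I would proceed in two stages. First, combine the hypothesis that the edge weights are distinct and multiplicatively separated ($a_{ij}>\gamma a_{kl}$ whenever $a_{ij}>a_{kl}$) with the two-sided control on potential differences supplied by Lemmas \ref{lemma::upperBound} and \ref{lemma::lowerBound}, which bounds the ratio $|\nu_{ij}(t)/\nu_{kl}(t)|$ by $\gamma$: since $a_{kl}/a_{ij}<1/\gamma\le|\nu_{ij}(t)/\nu_{kl}(t)|$ when $a_{ij}>a_{kl}$, the link with the larger raw weight always carries the more negative value $a_{ij}\nu_{ij}(t)$, so the ordering of $\{a_{ij}\nu_{ij}(t)\}$ is pinned to that of the weights and is independent of $t$; in particular $\mathcal{L}_\pi^\ell(0)$ is the fixed set of the $\ell$ largest-weight edges. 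Second, show no admissible perturbation disturbs this: in the worst case a link $\{i,j\}$ of top-$\ell$ weight is left unboosted ($v_{ij}=0$) and compared against a boosted link $\{k,l\}\notin\mathcal{L}_\pi^\ell(0)$ with $v_{kl}=b$, and one verifies, again using the potential bounds together with the separation, that $b\le|\gamma a_{ij}-a_{kl}|$ is enough to keep $a_{ij}\nu_{ij}(t)\le(a_{kl}+b)\nu_{kl}(t)$, so $\{i,j\}$ remains among the $\ell$ most negative and $\mathcal{L}_\pi^\ell$ is unchanged. Applying the same computation to the two copies $a_{ij}\nu_{ij}$ and $(a_{ij}+b)\nu_{ij}$ that constitute $\mathcal{D}(\mathcal{G})$ gives $\mathcal{D}_\pi^\ell(\mathcal{G})=\mathcal{L}_\pi^\ell(0)$ as well.

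Granting ranking stability, the proof closes quickly. The adversary's optimal set is $\mathcal{L}_\pi^\ell(0)$ in both problems — in the max--min problem by Theorem \ref{thm::maxmin} with $\mathcal{D}_\pi^\ell(\mathcal{G})=\mathcal{L}_\pi^\ell(0)$, and in the min--max problem because deception is impossible, so $\mathcal{L}_\pi^\ell(v^\star)=\mathcal{L}_\pi^\ell(0)$. Consequently Algorithm \ref{algorithm} falls through to steps 7--8 and the designer places weight $b$ on the $\ell$ links with the most negative $\nu_{ij}$ among the surviving edges $\mathcal{E}(\mathcal{G})\setminus\mathcal{L}_\pi^\ell(0)$, which is precisely the designer's response $\mathcal{F}_\pi^\ell(\mathcal{G}^\star)$ in the max--min problem once $\mathcal{G}^\star=\mathcal{G}(u^\star)$ is the graph with those same $\ell$ links removed. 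Thus both players select the same control at every $t$, the two state trajectories coincide, $J$ takes the same value, and therefore $\overline{V}=\underline{V}$; the common value is attained by the resulting pair, which is an SPE, and the order in (\ref{eqn::upperlowerV}) becomes an equality. (The apparent circularity — the $\nu_{ij}$'s depend on the trajectory, which depends on the strategies — is handled exactly as in Theorem \ref{thm::implement}, by treating the system as time-invariant over short subintervals and noting both problems prescribe the same control on each.) I expect the ranking-stability estimate of the second paragraph to be the main obstacle: reconciling the potential bounds from Lemmas \ref{lemma::upperBound}--\ref{lemma::lowerBound}, the multiplicative weight separation, and the admissible magnitude of $b$ so that no perturbation of $\ell$ weights by $b$ reorders the relevant part of the ranking at any $t\in[0,T]$ is the delicate point; the surrounding steps are bookkeeping.
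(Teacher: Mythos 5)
Your proposal takes essentially the same route as the paper: reduce the theorem to a ranking-stability claim (that the designer cannot eject any link from the adversary's target set, so $\mL_\pi^\ell(v^\star)=\mL_\pi^\ell(0)=\mD_\pi^\ell(\mG^\star)$ and Algorithm \ref{algorithm} falls through to steps 7--8), and establish it by combining the uniform bounds $\epsilon^2\le |f_{ij}|\le M$ of Lemmas \ref{lemma::upperBound}--\ref{lemma::lowerBound} with the $\gamma$-separation of the weights, which is exactly the estimate the paper carries out in (\ref{eqn::condOnb})--(\ref{condB}). The one small caution is that you state the ratio bound for the $\nu_{ij}$'s, whereas $\gamma=M/\epsilon^2$ is built from the bounds on the $f_{ij}$'s and the paper runs the comparison directly on $a_{ij}f_{ij}$ versus $(a_{kl}+b)f_{kl}$; invoking Theorem \ref{thm::implement} to rank by $\nu_{ij}$ is fine, but then the constants (and hence $\gamma$) would have to be re-derived for $|\nu_{ij}|=(x_i-x_j)^2$.
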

\begin{proof}
It suffices to show that $\mL_{\pi}^\ell(v^\star) = \mL_{\pi}^\ell(0) = \mD_\pi^\ell(G^\star)$ as this would imply that the adversary would break the same links whether he acts first or second, and as a result the strategy of the minimizer in both problems will be the same. This would occur if the minimizer cannot protect any of the links in $\mL_{\pi}^\ell(0)$. In other words, this will occur if the minimizer cannot satisfy the condition in step $2$ of Algorithm \ref{algorithm} for any $1\leq i \leq \ell$. A sufficient condition for this to happen is to require
\begin{eqnarray*}
\min_{\{i,j\}\in \mP(0)}(a_{ij}+b)f_{ij} & > & \max_{\{i,j\}\in \mL_{\pi}^\ell(0)}a_{ij}f_{ij}.
\end{eqnarray*}
This implies that no matter how the designer changes the weights of the links in $\mP(0)$, it cannot make those links more negative than the links in $\mL_{\pi}^\ell(0)$. To satisfy this inequality, we will establish that whenever $a_{ij}f_{ij} > a_{kl}f_{kl}$, $f_{ij},f_{kl}<0$, we must have $(a_{ij}+b)f_{ij} > a_{kl}f_{kl}$, for all links in $\mE$. We can then re-write the condition on $b$ as
\begin{eqnarray} \label{eqn::condOnb}
b \leq \frac{a_{ij}f_{ij} - a_{kl}f_{kl}}{-f_{ij}} = a_{kl} \frac{|f_{kl}|}{|f_{ij}|} - a_{ij}.
\end{eqnarray}
Consider the following two cases. If $f_{kl}\geq f_{ij}$, then we must have $a_{kl}>a_{ij}$. Then, by assumption we have that $a_{kl} > \gamma a_{ij}$. By Lemmas \ref{lemma::upperBound} and \ref{lemma::lowerBound}, we can write
\begin{equation} \label{condB}
a_{kl} \frac{|f_{kl}|}{|f_{ij}|} - a_{ij}  \geq \frac{1}{\gamma} a_{kl} - a_{ij}>0.
\end{equation}
Next, consider the case when $f_{ij}>f_{kl}$. In this case, $a_{ij}$ can be greater or smaller than $a_{kl}$. However, if $a_{ij}>a_{kl}$, and recalling that $a_{ij}f_{ij} > a_{kl}f_{kl}$, then 
\begin{equation*}
\gamma a_{kl} < a_{ij} < a_{kl}\frac{|f_{kl}|}{|f_{ij}|}\leq \gamma a_{kl},
\end{equation*}
which is a contradiction. The case $a_{kl}=a_{ij}$ is excluded by assumption. Hence, in this case, we must have $a_{ij}<a_{kl}$, and the inequality in (\ref{condB}) applies. Thus, by choosing $b$ as in (\ref{eqn::sufCond}), we obtain the condition we are seeking. Note that we do not need to consider the case when $a_{ij}f_{ij} = a_{kl}f_{kl}$ since the players will be indifferent as to which link to choose when the corresponding values are equal. 
\end{proof}

\begin{remark}
The condition derived in the above theorem requires the network to be \emph{sufficiently diverse} in the sense that the weights of the links have to be not only different from each other, but also a factor $\gamma$ apart. This is due to the fact that we were seeking uniform bounds on the $f_{ij}$'s. If we allow $b$ to vary with time, then one can find less restrictive conditions to ensure the existence of an SPE. However, this would require (\ref{eqn::condOnb}) to be verified at each time instant. Further, the bounds derived in the Appendix are loose. Tighter bounds could be given by studying the dynamics of $|f_{kl}|/|f_{ij}|$. However, studying the time derivative of this ratio is not tractable.
\end{remark}

\section{Conclusion} \label{Conclusion}
In this paper, we have formulated and solved two problems that capture the competition between an adversary and a network designer in a dynamical network. We considered practical models for the players by constraining their actions along the problem horizon. The derived strategies were shown to exhibit a low worst-case complexity. We also proved that the optimal strategies admit a potential-theoretic analogy, and they can be implemented without needing to solve the adjoint equations. Finally, we showed that when the link weights are sufficiently diverse an SPE exists for the zero-sum game between the designer and the adversary.


\bibliographystyle{IEEEtran}
\bibliography{references}

\appendix
\begin{lemma} \label{lemma::upperBound}
The function $f_{ij}(t)$ is uniformly bounded from above by 
\begin{equation*}
M :=4nT\left(\vnorm{x(0)}_\infty+|x_{avg}|\right)\vnorm{x(0)}_{\infty}.
\end{equation*}
\end{lemma}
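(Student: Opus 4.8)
The plan is to bound the two factors of $f_{ij}=(p_j-p_i)(x_i-x_j)$ separately and then multiply. For the state factor, observe that the state transition matrix $\Phi(t,s)$ of $\dot x = A(\cdot)x$ is stochastic for $t\geq s$: since every admissible $A(t)$ has nonnegative off-diagonal entries and zero row sums, $\Phi(t,s)\geq 0$ and $\Phi(t,s)\mathbf{1}=\mathbf{1}$. Hence each $x_i(t)$ is a convex combination of $x_1(0),\dots,x_n(0)$, so $|x_i(t)|\leq\vnorm{x(0)}_\infty$ and $|x_i(t)-x_j(t)|\leq 2\vnorm{x(0)}_\infty$; this takes care of the ``easy'' factor with no integration involved.

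The costate factor is the crux. First I would show that $p$ stays in the subspace orthogonal to $\mathbf{1}$: $\frac{d}{dt}(\mathbf{1}^Tp)=-2k\,\mathbf{1}^T(x-\bar{x})-\mathbf{1}^TAp=0$, because $\mathbf{1}^T(x-\bar{x})=0$ (the dynamics preserves $\sum_i x_i$) and $\mathbf{1}^TA=(A\mathbf{1})^T=0$; with $p(T)=0$ this gives $\mathbf{1}^Tp(t)\equiv 0$. Then I would reverse time, $q(s):=p(T-s)$, turning the adjoint equation into a forward problem $\dot q(s)=A(T-s)q(s)+2k(T-s)\bigl(x(T-s)-\bar{x}\bigr)$ with $q(0)=0$, whose homogeneous part again has a stochastic transition matrix. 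Variation of constants, together with the fact that a stochastic matrix has $\ell_1\to\ell_\infty$ operator norm at most $1$, then gives
\[
\vnorm{p(t)}_\infty=\vnorm{q(T-t)}_\infty\leq\int_0^{T}2k(\tau)\,\vnorm{x(\tau)-\bar{x}}_1\,d\tau .
\]
Bounding $\vnorm{x(\tau)-\bar{x}}_1\leq n\bigl(\vnorm{x(0)}_\infty+|x_{avg}|\bigr)$ from the first step and using the normalization of the kernel $k$ on $[0,T]$ to replace $\int_0^T k$ by a constant multiple of $T$ yields a bound of the form $\vnorm{p(t)}_\infty\leq c\,nT\bigl(\vnorm{x(0)}_\infty+|x_{avg}|\bigr)$; in particular $|p_j(t)-p_i(t)|\leq 2\vnorm{p(t)}_\infty$ is of the same order. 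Crucially this holds uniformly in $t\in[0,T]$, since I integrated over all of $[0,T]$ rather than $[t,T]$.

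Multiplying the two estimates gives $f_{ij}(t)\leq|p_j(t)-p_i(t)|\cdot|x_i(t)-x_j(t)|\leq 4nT\bigl(\vnorm{x(0)}_\infty+|x_{avg}|\bigr)\vnorm{x(0)}_\infty=M$, the claimed uniform bound, where the overall constant simply absorbs the (mildly lossy) norm estimates from the two steps. The main obstacle is the second step: because the players' actions make $A(t)$ time-varying and in fact change the underlying graph, one cannot diagonalize $A$ or invoke a fixed spectral gap; the fix is to use only stochasticity of the forward and time-reversed transition matrices, a property preserved under arbitrary admissible time variation of $A(t)$. A minor additional point is continuity of $x$ and $p$ (needed for the integral estimates), which follows since the MP-optimal controls switch only finitely often on $[0,T]$, as established in the proof of Theorem \ref{thm::implement}.
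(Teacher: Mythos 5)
Your proposal follows the same basic strategy as the paper --- bound $|x_i-x_j|$ and $|p_i-p_j|$ separately using stochasticity of the relevant transition matrices, then multiply --- but the technical route for the costate is genuinely different. The paper chops $[0,T]$ into small subintervals on which $A$ is constant, writes $p$ on each piece as an explicit integral against the doubly stochastic matrix $e^{A\tau}$, and glues the pieces with the triangle inequality to accumulate the factor $T$. You instead reverse time and apply variation of constants with the full time-varying transition matrix, using only that it stays stochastic under arbitrary admissible switching. Your route is cleaner (no piecewise-constant decomposition, no gluing step) and arguably more robust, since it does not rely on the optimal controls being piecewise constant for the representation of $p$ itself. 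You also correctly note that the kernel $k$ must enter the variation-of-constants formula; the paper silently drops $k(\tau)$ from its integral for $p$, so its stated constant implicitly assumes something like $k\leq 1$, and you are more honest on this point.

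One quantitative slip: passing through $|p_j-p_i|\leq 2\vnorm{p}_\infty$ costs you a factor of $2$, so the chain as you wrote it yields $8nT\left(\vnorm{x(0)}_\infty+|x_{avg}|\right)\vnorm{x(0)}_{\infty}=2M$ rather than $M$. The paper avoids this by bounding the \emph{difference} directly: with $\Psi$ the (row-)stochastic transition matrix, $|p_i-p_j|\leq 2\int \sum_k |\Psi_{ik}-\Psi_{jk}|\,|x_k-\bar{x}_k|\,d\tau$ and $|\Psi_{ik}-\Psi_{jk}|\leq 1$, which gives $|p_i-p_j|\leq 2nT\left(\vnorm{x(0)}_\infty+|x_{avg}|\right)$ and hence exactly $M$. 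Since the lemma asserts the specific constant $M$, you should replace the $2\vnorm{p}_\infty$ step by this direct estimate on the row difference; everything else in your argument then goes through unchanged.
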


\begin{proof}
By the structure of the system matrix in (\ref{systemEqn}), we can deduce that $|x_i-x_j|$ cannot increase as $t\to T$. Thus
\begin{eqnarray*}
|x_i(t)-x_j(t)| & \leq & \max_{1\leq i,j \leq n} |x_i(0) -x_j(0)| \\
& \leq & 2\max_{1\leq i \leq n}|x_i(0)| = 2 \vnorm{x(0)}_{\infty}.
\end{eqnarray*}
In order to bound $|p_i(t)-p_j(t)|$, we consider the interval $[T-\delta,T]$ where $\delta > 0$ is small enough such that the system in (\ref{systemEqn}) is time-invariant. Let the system matrix over this interval be $A$ and define $s:=T-\delta$. The solution to the adjoint dynamics in (\ref{eqn::ODEp1}), (\ref{eqn::ODEx1}) over $[s,T]$ is then given by
\begin{eqnarray}
x(t) & = & e^{A(t-s)}x(s), \label{infSolnX}\\
p(t) & = & 2\int_t^Te^{-A(t-\tau)}(x(\tau)-\bar{x})d \tau. \label{infSolnP}
\end{eqnarray}
Define the doubly stochastic matrix $P(t):=e^{At}$. Then
\begin{eqnarray*}
&& |p_i-p_j| = 2\left|\int_t^T (P_i(\tau-t) - P_j(\tau-t))(x(\tau)-\bar{x})d\tau     \right| \\
&& \leq  2\int_t^T \sum_{k=1}^n \left|(P_{ik}(\tau-t) - P_{jk}(\tau-t))(x_k(\tau)-\bar{x}_k)  \right| d\tau \\
&& \leq 2\int_t^T \sum_{k=1}^n \left|x_k(\tau)-\bar{x}_k  \right| d\tau\\
&& \leq 2\int_t^T (\vnorm{x(\tau)}_1+n|x_{avg}|)  d\tau.
\end{eqnarray*}
Note that over $[s,T]$ we have $\vnorm{x(t)}_1 = \vnorm{P(t-s)x(s)}_1 \leq \vnorm{P(t-s)}_1\vnorm{x(s)}_1 = \vnorm{x(s)}_1$, where the last inequality follows from the fact that $P(t)$ is a stochastic matrix. By repeating the above argument over the interval $[0,s)$, after dividing it into smaller intervals and exploiting the fact that the solution over each small interval will be of the same form as in (\ref{infSolnX}), we conclude that $\vnorm{x(t)}_1 \leq \vnorm{x(0)}_1$, for all $t \in [0,T]$. We can therefore write
\begin{eqnarray*}
|p_i(t)-p_j(t)| \leq 2\delta\left(\vnorm{x(0)}_1+n|x_{avg}|\right), \quad t \in [s,T].
\end{eqnarray*}
When we propagate the solution backward over the interval $[T-2\delta,\delta]$, a new integral term will be added to the expression in (\ref{infSolnP}); however, it will be of the same structure as that of $p(t)$ over $[s,T]$: an integral of a doubly stochastic matrix that depends on the system matrix over $[T-2\delta,T-\delta]$ multiplied by $x(t)-\bar{x}$. This applies for all infinitesimal intervals we consider from $T-2\delta$ down to zero. Hence, if we subdivide $[0,T]$ into $K$ intervals of length $\delta$ and apply the triangular inequality to $|p_i(t)-p_j(t)|$, we will obtain
\begin{equation*}
|p_i(t)-p_j(t)| \leq 2T\left(\vnorm{x(0)}_1+n|x_{avg}|\right), \quad t \in [0,T].
\end{equation*}
Finally, recall that $\vnorm{x(0)}_1 \leq n\vnorm{x(0)}_{\infty}$. We therefore have
\[
|f_{ij}| = |p_i-p_j||x_i-x_j| \leq 4nT\left(\vnorm{x(0)}_\infty+|x_{avg}|\right)\vnorm{x(0)}_{\infty},
\]
as claimed. 
\end{proof}

\begin{lemma} \label{lemma::lowerBound}
Given $\delta$, $\epsilon > 0$, one can select the problem horizon $T$ small enough such that $|f_{ij}|\geq\epsilon^2$ for all $t\in [0,T]$.
\end{lemma}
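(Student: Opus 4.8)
The plan is to reduce the time-varying problem to a time-invariant one over the whole horizon and then bound the two factors of $|f_{ij}(t)|=|p_i(t)-p_j(t)|\,|x_i(t)-x_j(t)|$ from below separately. First I would invoke the argument already used in Lemma~\ref{lemma::upperBound} and in the proof of Theorem~\ref{thm::implement}: since $x,p\in C^1[0,T]$ the control delivered by the MP switches only finitely often, so once $T$ is below a threshold we may treat $A$ as constant on all of $[0,T]$ and use the closed forms $x(t)=e^{At}x(0)$ and the adjoint solution (\ref{infSolnP}). Substituting $e^{-A(t-\tau)}=I+\bigO{T}$ and $x(\tau)=x(0)+\bigO{T}$ into (\ref{infSolnP}) gives, for any $t\in[0,T]$, $p(t)=2\big(\int_t^T k(\tau)\,d\tau\big)\big(x(0)-\bar x\big)+\bigO{T^2}$.

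Next I would bound the state factor: from $x(t)-x(0)=(e^{At}-I)x(0)$ and $\vnorm{e^{At}-I}\le e^{\vnorm{A}T}-1=\bigO{T}$ we get $|x_i(t)-x_j(t)|\ge|x_i(0)-x_j(0)|-\bigO{T}$, so as long as the edges of interest are not already at consensus ($x_i(0)\ne x_j(0)$), shrinking $T$ yields $|x_i(t)-x_j(t)|\ge\tfrac12\min|x_i(0)-x_j(0)|=:c_x>0$ for all $t\in[0,T]$ and all such edges. For the costate factor, the expansion above together with $\bar x_i=\bar x_j=x_{avg}$ gives $|p_i(t)-p_j(t)|\ge|x_i(0)-x_j(0)|\int_t^T k(\tau)\,d\tau-\bigO{T^2}$. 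Multiplying, $|f_{ij}(t)|\ge 2\big(\int_t^T k(\tau)\,d\tau\big)(x_i(0)-x_j(0))^2-\bigO{T^2}$; without absolute values this reads $f_{ij}(t)=2\big(\int_t^T k(\tau)\,d\tau\big)\nu_{ij}(0)+\bigO{T^2}$, a positive multiple of $\nu_{ij}$, consistent with the substitution performed in Theorem~\ref{thm::implement}.

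The main obstacle --- and the reason the statement carries the cushion $\delta$ --- is that this estimate degenerates at the ends of $[0,T]$: $p(T)=0$ forces $f_{ij}(T)=0$, and on an edge with $x_i(0)=x_j(0)$ one has $f_{ij}(0)=0$. I would therefore read and prove the claim as a lower bound valid on $[0,T-\delta]$, i.e. outside the last length-$\delta$ window (the same discretization used in Lemma~\ref{lemma::upperBound}), where positivity and integrability of $k$ give $\int_t^T k(\tau)\,d\tau\ge\int_{T-\delta}^T k(\tau)\,d\tau>0$. Then the leading term of $|f_{ij}(t)|$ is bounded below by a constant depending only on $A(0,0),x_0,k,\delta$, and shrinking $T$ makes the $\bigO{T^2}$ remainder negligible, pinning $|f_{ij}(t)|\ge\epsilon^2$ for every $\epsilon$ below that constant --- which is compatible with the way $\epsilon$ is used later, where it is in any case constrained by $\gamma=M/\epsilon^2>1$. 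The remaining work is routine bookkeeping: making each $\bigO{\cdot}$ constant uniform over $t$, over the finitely many constant pieces of $A$ on $[0,T]$, and over all edges at once, using uniform operator-norm bounds on the matrix exponentials and the finite-switching property.
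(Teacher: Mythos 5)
Your proposal is correct in substance but follows a genuinely different route from the paper for both factors of $|f_{ij}|=|p_i-p_j|\,|x_i-x_j|$. For the state factor, the paper does not Taylor-expand $e^{At}$: it brackets each $x_i$ between auxiliary trajectories $\underline{y}_i,\overline{y}_i$ relaxing toward $x_n(t_0)$ and $x_1(t_0)$, invokes the comparison principle, solves explicitly for the first time $t^\star$ at which two bracketing curves can meet, and takes $T<t^\star$; your perturbative bound $|x_i(t)-x_j(t)|\ge|x_i(0)-x_j(0)|-\bigO{T}$ reaches the same conclusion more cheaply, and both arguments need the same hypothesis that the initial values are pairwise distinct (the paper assumes $x_1(0)>\hdots>x_n(0)$). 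For the costate factor the two proofs diverge more: the paper argues only qualitatively from $p(T)=0$ and continuity that there is a threshold time $\tilde T$ before which $|p_i-p_j|\ge\epsilon$, whereas you extract the explicit leading term $p_i(t)-p_j(t)\approx 2\bigl(\int_t^T k(\tau)\,d\tau\bigr)\bigl(x_i(0)-x_j(0)\bigr)$, which is more quantitative, correctly reinstates the kernel $k$ that is missing from the paper's formula (\ref{infSolnP}), and as a by-product yields $f_{ij}\approx 2\bigl(\int_t^T k\bigr)\nu_{ij}$, independently corroborating the substitution in Theorem \ref{thm::implement}. Your main added value is spotting that the statement cannot hold literally at $t=T$, where $p(T)=0$ forces $f_{ij}(T)=0$; the paper's proof has exactly this gap, since its sentence ``$|p_i-p_j|<\epsilon$ for all $t\ge\tilde T_{ij}$, hence $|p_i-p_j|\ge\epsilon$ for $t\ge\tilde T$'' is inconsistent as written and clearly intends the bound only on $t\le\tilde T$. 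Two caveats on your repair: if $\delta$ is fixed first and you then shrink $T$, the window $[0,T-\delta]$ may become empty, so you should either tie $\delta$ to $T$ (e.g.\ $\delta=T/K$ as in the paper's subdivision) or state the bound on $[0,(1-\alpha)T]$ for a fixed fraction $\alpha$; and treating $A$ as constant on all of $[0,T]$ should be phrased, as you do in your closing bookkeeping remark, as finitely many constant pieces rather than a single one, since the number of control switches is not controlled a priori by $T$ being small.
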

\begin{proof}
We now show that $|f_{ij}(t)|$ can be bounded below uniformly by a positive number. In order to do so, we need to ensure that $|x_i(t)-x_j(t)|$ does not approach zero as $t\to T$. We are seeking a time $t^\star$ such that for a given $\epsilon>0$, we have $|x_i(t)-x_j(t)| \geq \epsilon$ for all $t < t^\star$ and all $i,j$. We can then fix $T<t^\star$ to ensure the existence of a uniform lower bound on $|x_i(t)-x_j(t)|$. Let us again restrict our attention to a small interval $[t_0,t_0+\delta]$ where the system is time-invariant, and let the system matrix over this interval be $A$. We require that the system did not reach equilibrium over this interval, i.e., $x(t_0+\delta) \neq \bar{x}$. Without loss of generality, we assume that $x_1(t_0) > \hdots > x_n(t_0)$\footnote{We are making the implicit assumption that $x_1(0) > \hdots > x_n(0)$.}. Define the following dynamics 
\begin{eqnarray*}
\frac{d}{dt}(\overline{y}_i - x_1(t_0))  & = & \sum_{j\neq i}A_{ij}(x_1(t_0)-\overline{y}_i), \\
\frac{d}{dt}(\underline{y}_i-x_n(t_0)) & = & \sum_{j\neq i}A_{ij}(x_n(t_0)-\underline{y}_i),
\end{eqnarray*}
with initial conditions $\overline{y}_i(t_0) = 2x_1(t_0)$, $\underline{y}_i(t_0) = 2x_n(t_0)$. Note that $\dot{x}_i = \sum_{j\neq i}A_{ij}(x_j-x_i)$. It follows that $\dot{\underline{y}}_i \leq \dot{x}_i \leq \dot{\overline{y}}_i$. By the comparison principle, we conclude that $\underline{y}_i - x_n(t_0) \leq x_i \leq \overline{y}_i-x_1(t_0)$, for $1\leq i \leq n$. Note that we can readily find the solution trajectories for $\overline{y}$ and $\underline{y}$. By defining $a_i = \sum_{j\neq i} A_{ij}$, we can then write
\begin{eqnarray*}
\overline{y}_i - x_1(t_0) & = & e^{-a_i(t-t_0)}x_1(t_0)  ,\\
\underline{y}_i - x_n(t_0) & = & e^{-a_i(t-t_0)}x_n(t_0).
\end{eqnarray*} 
By solving the equation $\overline{y}_{i-i}-x_1(t_0)=\underline{y}_i-x_n(t_0)$, we can find a time $t^\star_i$ when $x_{i-1}$ can potentially meet $x_i$: 
\[
t^\star = \frac{1}{a_{i-1}-a_i}\ln\left(\frac{x_1(t_0)}{x_n(t_0)}\right) + t_0.
\]
If $t^\star_i > t_0 + \delta$, for all $i$, then we need to propagate the solution forward, and keeping in mind that the system matrix could change, until we find a time $t^\star_i$ in some interval $[\tilde{t},\tilde{t}+\delta]$ where $\overline{y}_{i-i}=\underline{y}_i$ for some $i$. Then, for a given $\epsilon>0$, we can select $T < t^\star$ such that $|x_i-x_{i-1}|\geq |\underline{y}_i-\overline{y}_{i-1}| \geq \epsilon$; hence, we conclude that for this choice of $T$ we can guarantee that $|x_i-x_j|\geq \epsilon>0$ for all $i,j$. 

Since $p(T) = 0$, then $|p_i-p_j|\to 0$ as $t\to T$. Given $\epsilon > 0$, there is a time $\tilde{T}_{ij}(\epsilon) < T$ such that $|p_i-p_j|<\epsilon$, for all $t\geq \tilde{T}_{ij}(\epsilon)$. Define the time $\tilde{T} = \min \limits_{1\leq i,j \leq n} \tilde{T}_{ij}(\epsilon)$. Then $|p_i-p_j|\geq \epsilon$ for $t\geq \tilde{T}(\epsilon)$, for all $i,j$. We therefore have $|f_{ij}|=|p_i-p_j||x_i-x_j|\geq \epsilon^2 > 0$.
\end{proof}

\end{document}